\newtheorem{thm}{Theorem}[section]
\newtheorem{prob}[thm]{Problem}
\newtheorem{con}{Conjecture}
\newtheorem{mainthm}[thm]{Main Theorem}
\theoremstyle{definition}
\newtheorem{rem}[thm]{Remark}
\numberwithin{equation}{section}
\newcommand{\bs}{\backslash}
\newcommand{\N}{\mathbb{N}}
\newcommand{\Z}{\mathbb{Z}}
\newcommand{\Q}{\mathbb{Q}}
\begin{document}


\baselineskip=17pt



\title[The behavior of the $p$-adic valuations of Stirling numbers]{A note on $p$-adic locally analytic functions with application to behavior of the $p$-adic valuations of Stirling numbers}

\author[P. Miska]{Piotr Miska}
\address{Institute of Mathematics \\
	Faculty of Mathematics and Computer Science \\
	Jagiellonian University in Cracow\\
	ul. {\L}ojasiewicza 6, 30-348, Krak\'{o}w, Poland}
\email{piotr.miska@student.uj.edu.pl}

\date{}

\begin{abstract}
The aim of this paper is to prove conjectures concerning $p$-adic valuations of Stirling numbers of the second kind $S(n,k)$, $n,k\in\N_+$, stated by Amdeberhan, Manna and Moll and Berrizbeitia et al., where $p$ is a prime number. The proof is based on elementary facts from $p$-adic analysis.
\end{abstract}

\subjclass[2010]{Primary 11B73}

\keywords{$p$-adic locally analytic function, $p$-adic valuation, Stirling number}

\maketitle

\section{Introduction}

At the beginning of the paper, we denote the set of non-negative integers (i. e. positive integers with $0$ included) by $\N$ while the set of positive integers (with $0$ excluded) by $\N_+$.

Let $n,k\in\N_+$. The Stirling number of the second kind $S(n,k)$ counts the number of partitions of a set with $n$ elements into exactly $k$ nonempty subsets. Using a simple combinatorial reasoning one can obtain the recurrence
\begin{equation*}
S(n+1,k+1)=S(n,k)+(k+1)S(n,k+1),\quad\quad n,k\in\N_+,
\end{equation*}
with initial conditions $S(1,1)=1$, $S(1,k)=0$ for $k>1$ and $S(n,1)=1$ for $n\in\N_+$. The exact formula for the Stirling numbers of the second kind takes the form
\begin{equation*}
S(n,k)=\frac{1}{k!}\sum_{j=1}^k (-1)^{k-j}{k\choose j}j^n.
\end{equation*}
It is worth to note that the numbers $k!S(n,k)$ also have a combinatorial interpretation. Namely, $k!S(n,k)$ is the number of all surjections of a set with $n$ elements onto a set with $k$ elements.

Let us recall that for a prime number $p$ and a nonzero rational number $x$ we define the $p$-adic valuation $v_p(x)$ of the number $x$ as a number $t\in\Z$ such that $x=\frac{a}{b}p^t$, where $a\in\mathbb{Z}$, $b\in\mathbb{N}_+$, $\gcd(a,b)=1$ and $p\nmid ab$. Moreover, we put $v_p(0)=+\infty$. The problem of computation of the $p$-adic valuations (with emphasis on $2$-adic valuations) of Stirling numbers of the second kind and their relatives generated a lot of research, e.g. \cite{Cla, Dav2, Dav3, HZZ, Leng2, Lun1, Wan}. The Stirling numbers of the second kind appear in algebraic topology. Their connections with algebraic topological problems can be found in \cite{BenDav, CraKna, Dav4, DavPot, DavSun, KomLam, Lun2}. Amdeberhan, Manna and Moll in \cite{AMM} and Berrizbeitia, Medina, Moll, Moll and Noble in \cite{BMMMN} stated two conjectures on the behavior of the $p$-adic valuations of the numbers $S(n,k)$. In order to formulate these conjectures we will denote by $[a]_d$ the congruence class of an integer $a$ modulo a positive integer $d$:
\begin{equation*}
[a]_d=\{n\in\Z: n\equiv a\pmod{d}\}.
\end{equation*}
Moreover, for a sequence $(c(n))_{n\in\N_+}$ of rational numbers we will write
\begin{equation*}
c\left([a]_d\right)=\left\{c(n): n\in [a]_d, n\in\N_+\right\}.
\end{equation*}
In particular,
\begin{equation*}
S\left([a]_d,k\right)=\left\{S(n,k): n\in [a]_d, n\in\N_+\right\}
\end{equation*}
for $k\in\N$. Consequently,
\begin{equation*}
v_p\left(c\left([a]_d\right)\right)=\left\{v_p(c(n)): n\in [a]_d, n\in\N_+\right\}
\end{equation*}
and
\begin{equation*}
v_p\left(S\left([a]_d,k\right)\right)=\left\{v_p(S(n,k)): n\in [a]_d, n\in\N_+\right\}
\end{equation*}
for a prime number $p$. Fixing a sequence $(c(n))_{n\in\N_+}$ of rational numbers and a prime number $p$ we say that a congruence class $[a]_d$ is \emph{constant} with $p$-adic valuation $t$ if $v_p\left(c\left(n\right)\right)=t$ for each $n\in [a]_d$. Every congruence class that is not constant will be called \emph{non-constant}. The value $\min v_p\left(c\left([a]_d\right)\right)$ (if exists) will be called \emph{the least $p$-adic valuation of the class $[a]_d$}. Then the first conjecture, concerning the $2$-adic valuations of Stirling numbers of the second kind, can be presented in the following way.

\begin{con}[Conjecture 1.6. in \cite{AMM}]\label{Con1}
Fix a number $k\in\N_+$. Then there exist $m_0(k)\in\N_+$ and $\mu(k)\in\N$ such that for any integer $m\geq m_0(k)$ there are exactly $\mu(k)$ non-constant congruence classes $[a]_{2^m}$ modulo $2^m$ and each non-constant class modulo $2^m$ splits into one constant and one non-constant class modulo $2^{m+1}$.
\end{con}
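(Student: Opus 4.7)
The plan is to reduce the statement to a question about $2$-adic zeros of $p$-adic analytic functions. Starting from
\[
k!\,S(n,k)=\sum_{j=1}^k(-1)^{k-j}\binom{k}{j}j^n = T^{\mathrm{od}}(n)+T^{\mathrm{ev}}(n),
\]
where $T^{\mathrm{od}}$ and $T^{\mathrm{ev}}$ are the subsums over odd and even $j$, one observes that $v_2(T^{\mathrm{ev}}(n))\geq n$, so $T^{\mathrm{ev}}$ is $2$-adically negligible once $n$ exceeds a threshold depending only on $k$. For the odd part, separate further by the parity of $n$: writing $n=2s+\epsilon$ with $\epsilon\in\{0,1\}$ and exploiting that $j^2\in 1+8\Z_2$ for odd $j$, so that $(j^2)^s=\exp(s\log j^2)$ converges on all of $\Z_2$ (since $v_2(\log j^2)\geq 3$), we obtain two $p$-adic analytic functions
\[
h_\epsilon(s):=\sum_{\substack{1\le j\le k\\ j\text{ odd}}}(-1)^{k-j}\binom{k}{j}\,j^\epsilon\exp(s\log j^2),\qquad \epsilon\in\{0,1\},
\]
defined on all of $\Z_2$ and coinciding with $T^{\mathrm{od}}(2s+\epsilon)$ at integer values of $s$.

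The key analytic facts are Strassmann's theorem---each non-zero $h_\epsilon$ has only finitely many zeros in $\Z_2$---and the Newton-polygon observation that a power series convergent on a $2$-adic disk and non-vanishing there has constant $2$-adic valuation on that disk. Translating through $n=2s+\epsilon$, a residue class $[a]_{2^m}$ corresponds to a $2$-adic disk in $\Z_2$, and the dichotomy becomes: the class is constant with respect to $T^{\mathrm{od}}$ iff the associated $h_\epsilon$ has no zero in that disk, and non-constant iff it does. Together with the finitely many ``small-$n$ accidents'' in which $T^{\mathrm{ev}}(n)$ conspires with $T^{\mathrm{od}}(n)$ (for instance $n<k$ forcing $S(n,k)=0$), this pins down a finite set of critical points in $\Z_2$. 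Choosing $m_0(k)$ large enough so that all these critical points lie in pairwise distinct residue classes modulo $2^{m_0}$ fixes the count of non-constant classes at $\mu(k)$ for every $m\geq m_0(k)$.

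The splitting assertion is then transparent: refining $[a]_{2^m}$ into $[a]_{2^{m+1}}$ and $[a+2^m]_{2^{m+1}}$ halves the corresponding $2$-adic disk, and the unique critical point in the parent disk sits in exactly one of the two halves, so exactly one refined class is non-constant while the other becomes constant. The principal obstacle I anticipate is the bookkeeping at the interface between the analytic regime (governed by $h_\epsilon$) and the finitely many small-$n$ accidents (governed by $T^{\mathrm{ev}}$): one must verify that for $m\geq m_0(k)$ each such exception lies in its own residue class, and that the classes deemed constant by the analytic argument really do have constant $v_2(S(\cdot,k))$ on all of their elements rather than merely on the tail.
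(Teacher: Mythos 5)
Your decomposition and analytic machinery coincide with the paper's: your odd-$j$ subsum is the paper's $T_2(n,k)$, your even-$j$ subsum is the term $\sum_{2\mid j}(-1)^{k-j}\binom{k}{j}j^n$ with the same bound $v_2\geq n$, and interpolating $j^n$ by a $2$-adic locally analytic function after forcing the base into $1+4\Z_2$ is exactly the construction of the functions $f_{a,k}$ in the paper (your explicit split $n=2s+\epsilon$ is, if anything, a cleaner handling of the case $p=2$ than the paper's). The only methodological difference is that where you invoke Strassmann's theorem and the Newton polygon to isolate the zeros and read off $v_2(h_\epsilon(s))=\alpha+l\,v_2(s-s_0)$ near a zero, the paper gets the same local description from compactness of $\Z_2$ applied to the local power-series expansions, plus a Vandermonde-determinant argument bounding the order of vanishing by $k-1$. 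These are interchangeable here, and everything up to and including the splitting mechanism (the unique critical point falls into exactly one child disk) is sound.

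The obstacle you flag in your last sentence is, however, a genuine gap, and it is exactly the point at which the paper itself stops short of Conjecture~\ref{Con1} as literally stated. The bound $v_2(T^{\mathrm{ev}}(n))\geq n$ only neutralizes the even part for $n$ larger than the valuation $t$ of the class; a class whose constant valuation $t$ (which grows like $\alpha+l(m-m_0)$ as the classes refine) exceeds its smallest positive representative can have finitely many exceptional members where $T^{\mathrm{ev}}$ interferes, the extreme instance being $n<k$, where $S(n,k)=0$ has valuation $+\infty$ while the analytic prediction from $h_\epsilon$ is finite. These exceptional integers do not disappear by enlarging $m_0$: the integer $n_0=1$, say, lies in some class modulo $2^m$ for every $m$, so the class containing it is never literally constant. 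Consequently your argument, like the paper's, proves the statement only with ``constant'' weakened to ``constant for all but finitely many members of the class'' --- which is precisely why the paper's Main Theorem is phrased with ``almost constant'' and ``essentially non-constant'' classes rather than reproducing Conjecture~\ref{Con1} verbatim. To close the gap you would have to carry out the bookkeeping you describe: show that each exceptional integer eventually lies in the unique non-constant child at every level (so that it never contaminates a class declared constant), or else concede the weakened formulation.
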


The authors of \cite{AMM} proved the above conjecture for $k\leq 5$. Their work was continued by Bennett and Mosteig in \cite{BenMos}, who proved this conjecture in case $5\leq k\leq 20$. Conjecture \ref{Con1} in a slightly different form was proved by Davis in \cite{Dav1} for partial Stirling numbers of the second kind $S_2(n,k)=\frac{1}{k!}\sum_{j=1, 2\nmid j}^k (-1)^{k-j}{k\choose j}j^n$, where $k\leq 36$. It is remarkable that according to Conjecture \ref{Con1} all the numbers $S(2^n,k)$, $n\gg 0$, have the same $2$-adic valuation. The independence of $2$-adic valuation of $S(2^n,k)$ on $n$ was noticed by Lengyel in \cite{Leng}. He conjectured and proved in some cases that
\begin{equation}\label{LengWan}
v_2(k!S(2^n,k))=k-1
\end{equation}
for every non-negative integer $n$ such that $2^n\geq k$. The above equality in full generality was proved by De Wannemacker in \cite{Wan}.

For an odd prime number $p$ and positive integer $m$ we put
$$L_{p^m}=(p-1)p^{\lceil\log_p k\rceil +m-2}.$$
Then the sequence $(S(n,k)\pmod{p^m})_{n\in\N_{\geq k}}$ is ultimately periodic with period $L_{p^m}$ (see \cite{BeckRio, Car}). Now we are ready to present the second conjecture, which generalizes the first one for arbitrary prime number $p$.

\begin{con}[Conjecture 5.4. in \cite{BMMMN}]\label{Con2}
Fix a positive integer $k$ and a prime number $p$ such that $k<p$. Then there exist $m_0(k)\in\N_+$ and $\mu(k)\in\N$ such that for any integer $m\geq m_0(k)$ there are exactly $\mu(k)$ non-constant congruence classes $[a]_{L_{p^m}}$ modulo $L_{p^m}$ and each non-constant class modulo $L_{p^m}$ splits into $p-1$ constant and one non-constant class modulo $L_{p^{m+1}}$.
\end{con}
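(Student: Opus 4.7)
The plan is to reinterpret the explicit formula $S(n,k)=\frac{1}{k!}\sum_{j=1}^{k}(-1)^{k-j}\binom{k}{j}j^{n}$ through a family of $p$-adic analytic interpolating functions in the variable $n$. Since $k<p$, every $j\in\{1,\ldots,k\}$ is coprime to $p$ and admits a Teichm\"uller decomposition $j=\omega(j)\langle j\rangle$ with $\omega(j)^{p-1}=1$ and $\langle j\rangle\in 1+p\Z_p$. For each residue $a\in\{0,1,\ldots,p-2\}$ modulo $p-1$ I would set
\[
f_a(x)=\frac{1}{k!}\sum_{j=1}^{k}(-1)^{k-j}\binom{k}{j}\omega(j)^{a}\langle j\rangle^{x},\qquad x\in\Z_p,
\]
where $\langle j\rangle^{x}=\exp(x\log\langle j\rangle)$ converges on all of $\Z_p$ because $\log\langle j\rangle\in p\Z_p$ and $p\ge 3$. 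Then $f_a$ is $p$-adic analytic, $v_p(k!)=0$, and for every integer $n\ge k$ with $n\equiv a\pmod{p-1}$ the identity $j^{n}=\omega(j)^{a}\langle j\rangle^{n}$ forces $f_a(n)=S(n,k)$. The study of $v_p(S(\cdot,k))$ on classes modulo $L_{p^m}$ thus reduces to the study of $v_p(f_a(n))$ on the integers of suitable $p$-adic disks.

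I would next invoke Strassmann's theorem: either $f_a\equiv 0$, in which case $[a]_{p-1}$ contributes only constant classes (with value $+\infty$), or $f_a$ has only finitely many zeros in $\Z_p$. At each zero $\alpha$ of multiplicity $e_\alpha$ the power-series expansion around $\alpha$ yields a factorization $f_a(x)=(x-\alpha)^{e_\alpha}g_\alpha(x)$ with $g_\alpha$ analytic and $g_\alpha(\alpha)\neq 0$, so by ultrametric continuity $v_p(g_\alpha(x))=v_p(g_\alpha(\alpha))$ on a sufficiently small disk around $\alpha$. Choose $m_0(k)$ so that, for every $m\ge m_0(k)$ and every $a$: (i) distinct zeros of $f_a$ lie in distinct disks of radius $p^{-(\lceil\log_p k\rceil+m-2)}$, and (ii) each $g_\alpha$ has constant $p$-adic valuation on such a disk containing $\alpha$.

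A congruence class $[a']_{L_{p^m}}$ with $a'\equiv a\pmod{p-1}$ corresponds exactly to such a disk $D$ (intersected with $[a]_{p-1}$). If $D$ contains no zero of $f_a$, then $v_p(x-\alpha)$ is constant on $D$ for every $\alpha\in Z_a$, and one concludes $v_p(f_a(x))$ is constant on $D$. If $D$ contains some zero $\alpha$, the factorization gives $v_p(f_a(x))=e_\alpha v_p(x-\alpha)+v_p(g_\alpha(\alpha))$ for $x\in D$, while $v_p(x-\alpha)$ is unbounded as $x$ ranges over the integers in $D$, so the class is non-constant. The non-constant classes thus correspond bijectively to the zeros in $\bigcup_a Z_a$, fixing $\mu(k)=\sum_a|Z_a|$. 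Passing from $L_{p^m}$ to $L_{p^{m+1}}$ splits $D$ into $p$ sub-disks of one-smaller radius; exactly one of them still contains $\alpha$ and remains non-constant, while the other $p-1$ no longer contain $\alpha$, so $v_p(x-\alpha)$ becomes constant on them and they become constant classes, exactly as asserted.

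The main obstacle I anticipate is the careful bookkeeping of the analytic framework: verifying that $f_a$ really is $p$-adic analytic on $\Z_p$ so that Strassmann delivers finitely many zeros, and then extracting a quantitative $m_0(k)$ enforcing (i) and (ii) above. Higher-multiplicity zeros cause no difficulty because the formula $v_p(f_a(x))=e_\alpha v_p(x-\alpha)+v_p(g_\alpha(x))$ still produces the required trichotomy, and the possibility $f_a\equiv 0$ simply means that the residue $a$ modulo $p-1$ contributes nothing to $\mu(k)$.
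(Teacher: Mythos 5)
Your proposal is correct and follows essentially the same route as the paper: since $k<p$, the sum $\frac{1}{k!}\sum_{j=1}^{k}(-1)^{k-j}\binom{k}{j}j^{n}$ is interpolated on each residue class modulo $p-1$ by a $p$-adic locally analytic function on $\Z_p$, and the trichotomy (constant valuation away from zeros, $v_p(f_a(x))=e_\alpha v_p(x-\alpha)+\mathrm{const}$ near an isolated zero) yields exactly the claimed splitting of classes modulo $L_{p^m}=(p-1)p^{m-1}$. The only differences are cosmetic: you use the Teichm\"uller decomposition and Strassmann's theorem where the paper writes $j^{a+\tilde n(p-1)}=j^a(j^{p-1})^{\tilde n}$ and uses compactness of $\Z_p$ to extract the finitely many relevant disks.
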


The above conjecture implies that, under certain conditions, for any $a\in\N$ and sufficiently large $n\in\N$ the $p$-adic valuation of the number $S(ap^n(p-1),k)$ is independent on $a$ and $n$. This fact was proved by Gessel and Lengyel in \cite{GesLeng} provided that $\frac{k}{p}$ is not an odd integer. More precisely, they gave a formula 
\begin{equation}\label{GesLeng}
v_p(k!S(ap^n(p-1),k))=\left\lfloor\frac{k-1}{p-1}\right\rfloor+\tau_p(k),
\end{equation}
where $\tau_p(k)\in\N$. It is not clear in general what values $\tau_p(k)$ can take. The formula for $\tau_p(k)$ is given for $p\in\{3,5\}$. Moreover, $\tau(k)=0$ for $k$ being a multiple of $p-1$. As we can see, the formula (\ref{GesLeng}) is a generalization of (\ref{LengWan}).

We will say that a congruence class $[a]_d$ of an integer $a$ modulo a positive integer $d$ is \emph{almost constant} with $p$-adic valuation $t$ if $v_p(S(n,k))=t$ for almost all positive integers $n\in [a]_d$. The congruence class which is not almost constant will be called \emph{essentially non-constant}. If the limit
$$\liminf_{n\rightarrow +\infty, n\in [a]_d}  v_p(c_n)=:\liminf v_p(S([a]_d,k))$$
exists then it will be called \emph{the ultimate minimal $p$-adic valuation of the class $[a]_d$}. Now we are ready to state the theorem describing the behavior of the $p$-adic valuations of the Stirling numbers of the second kind.

\begin{mainthm}\label{thm1}
Fix a positive integer $k$ and a prime number $p$. Then there exist $m_0(k)\in\N_+$ and $\mu(k)\in\N$ such that for any integer $m\geq m_0(k)$ there are exactly $\mu(k)$ essentially non-constant congruence classes $[a]_{p^{m-1}(p-1)}$ modulo $p^{m-1}(p-1)$. Moreover, each essentially non-constant class $[a]_{p^{m-1}(p-1)}$ modulo $p^{m-1}(p-1)$ splits into $p-1$ almost constant classes $[a_1]_{p^{m}(p-1)}$, ..., $[a_{p-1}]_{p^{m}(p-1)}$ modulo $p^{m}(p-1)$ such that
$$v_p(S([a_t]_{p^{m}(p-1)}))=\{\min v_p(S([a]_{p^{m-1}(p-1)}))\},\quad t\in\{1,...,p-1\},$$
and one essentially non-constant class $[a_0]_{p^{m}(p-1)}$ modulo $p^{m}(p-1)$ with
\begin{align*}
0<\min v_p(S([a_0]_{p^{m}(p-1)}))-\min v_p(S([a]_{p^{m-1}(p-1)}))<k-\left\lfloor\frac{k}{p}\right\rfloor.
\end{align*}

More precisely, for each essentially non-costant congruence class\linebreak $[a]_{p^{m_0-1}(p-1)}$ modulo $p^{m_0-1}(p-1)$ there are $\alpha\in\Z$ and $l\in\left\{1,...,k-\left\lfloor\frac{k}{p}\right\rfloor-1\right\}$ such that for each $m\geq m_0$ the ultimate minimal $p$-adic valuation of the unique essentially non-constant class modulo $p^{m-1}(p-1)$ contained in\linebreak $[a]_{p^{m_0-1}(p-1)}$ is equal to $\alpha+l(m-m_0)$.

In case of $p>k$ in the above statement we can omit words ``almost'', ``essentially'' and ``ultimate''. Then, for each non-costant congruence class $[a]_{p^{m_0-1}(p-1)}$ modulo $p^{m_0-1}(p-1)$ there are $\beta\in\Z$, $l\in\left\{1,...,k-\left\lfloor\frac{k}{p}\right\rfloor-1\right\}$ and $x_0\in\Z_p$ such that $v_p(S(n,k))=\beta+lv_p(n-x_0)$ for $n\in [a]_{p^{m_0-1}(p-1)}$.
\end{mainthm}

\begin{rem}
{\rm In cases of $k=1$ or $k=p=2$ one can straightforward check $v_p(S(n,k))=0$ for all $n\in\N_+$. Hence the statement of Theorem \ref{thm1} remains true in these cases, although the set $\left\{t+1,...,t+k-\left\lfloor\frac{k}{p}\right\rfloor-1\right\}$ is empty. In fact, we then have no non-constant classes with respect to the sequence $(S(n,k))_{n\in\N_+}$.}
\end{rem}



\section{Elementary $p$-adic analytic approach}\label{sec1}

In order to prove Theorem \ref{thm1} we will use some basic facts about the field of $p$-adic numbers and $p$-adic locally analytic functions.

First, let us recall the construction of the field of $p$-adic numbers. For every rational number $x$ we define its $p$-adic norm $|x|_p$ by the formula
\begin{equation*}
|x|_p =
\begin{cases}
p^{-v_p(x)}, & \mbox{when } x\neq 0
\\ 0, & \mbox{when } x=0
\end{cases}.
\end{equation*}
Since for all rational numbers $x,y$ we have $|x+y|_p \leq \min\{ |x|_p, |y|_p\}$, hence $p$-adic norm gives a metric space structure on $\Q$. Namely, the distance between rational numbers $x,y$ is equal to $d_p(x,y) = |x-y|_p$. The field $\Q$ equipped with the $p$-adic metric $d_p$ is not a complete metric space. The completion of $\Q$ with respect to this metric has a structure of a field and this field is called the field of $p$-adic numbers and denoted by $\Q_p$. We extend the $p$-adic valuation and $p$-adic norm on $\Q_p$ in the following way: $v_p(x) = \lim_{n\rightarrow +\infty} v_p(x_n)$, $|x|_p = \lim_{n\rightarrow +\infty} |x_n|_p$, where $x\in\Q_p$, $(x_n)_{n\in\N} \in \Q^{\N}$ and $x = \lim_{n\rightarrow +\infty} x_n$. The values $v_p(x)$ and $|x|_p$ do not depend on the choice of a sequence $(x_n)_{n\in\N}$, thus they are well defined. For the proofs of these facts one can consult \cite{Mahl}.

We define the ring of integer $p$-adic numbers $\Z_p$ as a set of all $p$-adic numbers with non-negative $p$-adic valuation. Note that $\Z_p$ is the completion of $\Z$ as a space with the $p$-adic metric. In the sequel we will use the fact that $\Z_p$ is a compact metric space.

We assume that the expression $x \equiv y \pmod{p^k}$ means $v_p(x-y) \geq k$ for prime number $p$, an integer $k$ and $p$-adic numbers $x,y$.

By $p$-adic continuous function we mean a function $f:S\rightarrow\Q_p$ defined on some subset $S$ of $\Q_p$, which is continuous with respect to the $p$-adic metric. Assuming that $S$ is an open subset of $\Q_p$, we will say that $f$ is differentiable at a point $x_0\in S$, if there exists a limit $\lim_{x\rightarrow x_0}\frac{f(x)-f(x_0)}{x-x_0}$. In this situation this limit will be called the derivative of $f$ at the point $x_0$ and denoted  by $f'(x_0)$. If $f$ is differentiable at each point of its domain then we will say that $f$ is a $p$-adic differentiable function and the mapping $S\ni x\mapsto f'(x)\in\Q_p$ is the derivative of $f$. Recursively we are able to define the higher order derivatives of $f$ as follows: $f^{(1)}(x)=f'(x)$ and if $f^{(n)}$ is a $p$-adic differentiable function for $n\in\N_+$ then we define $f^{(n+1)}(x)=\left(f^{(n)}\right)'(x)$. Finally, still assuming $S$ to be an open subset of $\Q_p$, we will say that $f:S\rightarrow\Q_p$ is a $p$-adic locally analytic function if for each $x_0\in S$ there exists a neighbourhood $U\subset S$ of $x_0$ and the sequence $(a_n)_{n\in\N}$ of $p$-adic numbers such that for any $x\in U$ the series $\sum_{n=0}^{+\infty} a_n(x-x_0)^n$ is convergent and its sum is equal to $f(x)$. A $p$-adic locally analytic function $f$ is continuous, arbitrarily many times differentiable and $f^{(n)}(x_0)=n!a_n$ for $n\in\N$ (see \cite{Mahl}).


If $f:\Z_p\rightarrow\Q_p$ is a $p$-adic locally analytic function then we can easily obtain a description of the $p$-adic valuations of numbers $f(n)$, $n\in\N$.

\begin{thm}\label{thm2}
Let $f:\Z_p\rightarrow\Q_p$ be a $p$-adic locally analytic function. Then there exist $m_0\in\N_+$ and $\mu\in\N$ such that for any integer $m\geq m_0$ there are exactly $\mu$ non-constant congruence classes $[a]_{p^m}$ modulo $p^m$ with respect to the sequence $(f(n))_{n\in\N_+}$ and each non-constant class modulo $p^m$ splits into $p-1$ constant classes $[a_1]_{p^{m+1}}$, ..., $[a_{p-1}]_{p^{m+1}}$ modulo $p^{m+1}$ with $p$-adic valuation equal to $\min v_p(f([a]_{p^m}))$ and one non-constant class $[a_0]_{p^{m+1}}$ modulo $p^{m+1}$ with
$$\min v_p(f([a_0]_{p^{m+1}}))>\min v_p(f([a]_{p^m})).$$

More precisely, for each non-costant congruence class $[a]_{p^{m_0}}$ modulo $p^{m_0}$ there are $\beta\in\Z$, $l\in\N_+$ and $x_0\in\Z_p$ such that $v_p(f(n))=\beta+lv_p(n-x_0)$ for $n\in [a]_{p^{m_0}}$. In particular, for each non-costant congruence class $[a]_{p^{m_0}}$ modulo $p^{m_0}$ there are $\alpha\in\Z$ and $l\in\N_+$ such that for each $m\geq m_0$ the minimal $p$-adic valuation of the unique non-constant class modulo $p^m$ contained in $[a]_{p^{m_0}}$ is equal to $\alpha+l(m-m_0)$.
\end{thm}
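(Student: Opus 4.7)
The strategy is to read off the structure of $v_p(f(n))$ from the local power series expansions of $f$, made uniform via compactness of $\Z_p$.

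First I would fix, using compactness of $\Z_p$ together with local analyticity, an integer $m_0\in\N_+$ such that for every $y\in\Z_p$ the expansion $f(x)=\sum_{n\geq 0}c_n(y)(x-y)^n$ converges throughout the ball $B(y,p^{-m_0})$; a finite subcover plus the non-Archimedean invariance of the disc of convergence under re-centering suffices. Next I would introduce the open set $U$ of points on which $f$ vanishes identically on a neighborhood, and show that the complementary compact set $K=\Z_p\setminus U$ contains only finitely many zeros of $f$. Indeed, if $y^*\in K$ were an accumulation point of zeros, then the power series of $f$ at $y^*$ would possess infinitely many zeros in its disc of convergence, forcing it to vanish identically, which would place $y^*\in U$, a contradiction. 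So there are only finitely many isolated zeros, say $y_1^*,\ldots,y_\mu^*$.

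Near each isolated zero $y_i^*$, write the local series as $\sum_n c_n(y_i^*)(x-y_i^*)^n$ and let $l_i$ be the smallest index with $c_{l_i}(y_i^*)\neq 0$. By enlarging $m_0$ if necessary (using $v_p(c_n(y_i^*))+nm_0\to\infty$ to dominate the tail by the leading term), I would verify the exact equality
\[v_p(f(x))=v_p(c_{l_i}(y_i^*))+l_i\,v_p(x-y_i^*)\]
for every $x\in B(y_i^*,p^{-m_0})$. Simultaneously, on any ball $B(a,p^{-m_0})$ disjoint from $\{y_1^*,\ldots,y_\mu^*\}$, either $B(a,p^{-m_0})\subset U$ (so $f\equiv 0$ there) or $f$ is bounded away from $0$ on that ball; a compactness argument on the closed complement of small neighborhoods of the $y_i^*$ lets me enlarge $m_0$ once more so that $v_p(f)$ is \emph{constant} on each such ball.

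With $m_0$ fixed in this way, the conclusion falls out. At each level $m\geq m_0$ the non-constant congruence classes modulo $p^m$ are exactly the $\mu$ balls containing one of the $y_i^*$. The non-constant class $[a]_{p^m}$ with $y_i^*\equiv a\pmod{p^m}$ decomposes into $p$ sub-balls modulo $p^{m+1}$; the $p-1$ sub-balls not containing $y_i^*$ satisfy $v_p(x-y_i^*)=m$ identically, so by the exact formula above they are constant with valuation $v_p(c_{l_i}(y_i^*))+l_im=\min v_p(f([a]_{p^m}))$, while the remaining sub-ball contains $y_i^*$ and is again non-constant with minimum $v_p(c_{l_i}(y_i^*))+l_i(m+1)$, strictly greater. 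The final "more precise" formula $v_p(f(n))=\beta+l\,v_p(n-x_0)$ on each non-constant class at level $m_0$ is exactly what has been established with $x_0=y_i^*$, $l=l_i$, $\beta=v_p(c_{l_i}(y_i^*))$. The main technical work is the compactness-based promotion of three independent local facts — uniformity of the radius of convergence, leading-term dominance in a full neighborhood of each isolated zero, and constancy of $v_p(f)$ on every ball away from the isolated zeros — into a single uniform threshold $m_0$; each step is routine in isolation, and once uniformity is in place the tree structure in the theorem is immediate.
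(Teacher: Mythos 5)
Your proposal is correct and follows essentially the same route as the paper: local power series expansions classified into the three cases (nonvanishing point, locally identically zero, isolated zero with leading term $a_l(x-x_0)^l$ giving $v_p(f(x))=v_p(a_l)+l\,v_p(x-x_0)$), made uniform by compactness of $\Z_p$ to produce a single threshold $m_0$, with $\mu$ equal to the number of isolated zeros. The only organizational difference is that you extract the finiteness of the set of isolated zeros via an accumulation-point argument before building the cover, while the paper obtains it directly from the finite subcover; this is not a substantive divergence.
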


\begin{proof}
Choose an arbitrary $x_0\in\Z_p$. We consider three cases.

\noindent\textbf{Case 1.} If $f(x_0)\neq 0$ then $v_p(f(x_0))<+\infty$. By continuity of $f$ there exists a neighbourhood $U_{x_0}\subset\Z_p$ of $x_0$ such that $v_p(f(x)-f(x_0))>v_p(f(x_0))$ for each $x\in U_{x_0}$. As a consequence, we have $v_p(f(x))=v_p(f(x_0))$ for $x\in U_{x_0}$.

\noindent\textbf{Case 2.} There exists a neighbourhood $U_{x_0}\subset\Z_p$ of $x_0$ such that $f(x)=0$ for each $x\in U_{x_0}$ and then $v_p(f(x))=+\infty$.

\noindent\textbf{Case 3.} There holds $f(x_0)=0$ and $f$ is not identically equal to zero function on any neighbourhood of $x_0$. Then for some neighbourhood $U_{x_0}\subset\Z_p$ of $x_0$ we can write $f(x)=\sum_{n=l}^{+\infty} a_n(x-x_0)^n$ for $x\in U_{x_0}$, where $l>0$ and $a_l\neq 0$. Since $$\lim_{x\rightarrow x_0}\sum_{n=l}^{+\infty} a_n(x-x_0)^{n-l}=a_l,$$ we may assume without loss of generality that the neighbourhood $U_{x_0}$ is so small that $v_p\left(\sum_{n=l}^{+\infty} a_n(x-x_0)^{n-l}\right)=v_p(a_l)$ for each $x\in U_{x_0}$. Then
\begin{align*}
v_p(f(x))&=v_p\left(\sum_{n=l}^{+\infty} a_n(x-x_0)^n\right)=v_p\left(\sum_{n=l}^{+\infty} a_n(x-x_0)^{n-l}\right)+v_p\left((x-x_0)^l\right)\\
&=v_p(a_l)+lv_p(x-x_0).
\end{align*}

For each $x_0\in\Z_p$ we take $U_{x_0}$ specified in the above cases. We may choose $U_{x_0}$ to be a ball in the $p$-adic metric:
\begin{align*}
B(x_0,p^{-m})&=\{x\in\Z_p: |x-x_0|_p\leq p^{-m}\}=\{x\in\Z_p: v_p(x-x_0)\geq m\}\\
&=\{x\in\Z_p: x\equiv x_0\pmod{p^m}\},
\end{align*}
where $m\in\N$. Let us notice that $B(x_0,p^{-m})\cap\Z=[x_1]_{p^m}$ for any integer $x_1$ satisfying $x_1\equiv x_0\pmod{p^m}$. The family $\{U_{x_0}\}_{x_0\in\Z_p}$ is an open cover of $\Z_p$. By compactness of $\Z_p$ we may choose $x_1,...,x_r\in\Z_p$ such that $\{U_{x_j}\}_{j=1}^r$ is\linebreak a cover of $\Z_p$. We write $U_{x_j}=B(x_j,p^{-m_j})$ for $j\in\{1,...,r\}$. Every two balls in the $p$-adic metric are either disjoint or one contains the second one. Hence we can choose a finite subcover $(U_{x_j})_{j=1}^r$ to be consisting of pairwise disjoint balls. We put $m_0=\max_{1\leq j\leq r}m_j$. Let us consider the balls $B(i,p^{-m_0})$, $i\in\{0,...,p^{m_0}-1\}$. They are a partition of $\Z_p$. Fixing $i\in\{0,...,p^{m_0}-1\}$ there exists $j\in\{1,...,r\}$ such that $B(i,p^{-m_0})\subset B(x_j,p^{-m_j})$. If $x_j$ is as in case Case 1 or Case 2 in the above reasoning then $\#\{v_p(f(x)): x\in B(i,p^{-m_0})\}=1$ and, as a result, the congruence class $[i]_{p^{m_0}}$ is constant.

If $x_j$ is as in Case 3 and $v_p(i-x_j)<m_0$ then for $x\in B(i,p^{-m_0})$ we have $f(x)=\sum_{n=l}^{+\infty} a_n(x-x_j)^n$ and
\begin{align*}
v_p(f(x))&=v_p(a_l)+lv_p(x-x_j)=v_p(a_l)+lv_p((x-i)+(i-x_j))\\
&=v_p(a_l)+lv_p(i-x_j).
\end{align*}
Thus the congruence class $[i]_{p^{m_0}}$ is constant. On the other hand, if $x_j$ is as in Case 3 and $v_p(i-x_j)\geq m_0$ then $v_p(f(x))=v_p(a_l)+lv_p(x-x_j)$ and we easily conclude that for each $m\geq m_0$ there exists exactly one non-constant class $[h]_{p^m}$ (namely the class of $h\equiv x_j\pmod{p^m}$) contained in $[i]_{p^{m_0}}$. This class splits into $p-1$ constant classes modulo $p^{m+1}$ and one non-constant class modulo $p^{m+1}$. Each non-constant class modulo $p^m$, $m\geq m_0$, corresponds to one $j\in\{1,...,r\}$ such that $x_j$ is as in Case 3. Hence the number $\mu$ postulated in the statement of our theorem is the number of indices $j$ for which $x_j$ is as in Case 3. In other words, $\mu$ is the number of the isolated zeros of the function $f$.
\end{proof}

The reasoning in the proof of Theorem \ref{thm2} recalls us that the set of zeros of a $p$-adic locally analytic function $f:\Z_p\rightarrow\Q_p$ is a union of a finite subset and a clopen subset of $\Z_p$. This fact is used in a proof of basic version of Skolem-Mahler-Lech theorem that if $(c(n))_{n\in\N_+}$ is a sequence of rational numbers given by a linear recurrence $c(n+k)=\alpha_0c(n)+...+\alpha_{k-1}c(n+k-1)$ for some $\alpha_0,...,\alpha_{k-1}\in\Q$ then the set $\{n\in\N_+: c(n)=0\}$ is a union of\linebreak a finite set and finitely many arithmetic progressions (see \cite{Sko}).

The next fact that we utilize to prove the main result of the paper is a generalization of the lifting the exponent lemma. This generalization describes the behavior of the $p$-adic valuations of a linear combination of exponential functions. Let us see that if $p$ is an odd number and $a$ is a $p$-adic integer congruent to $1$ modulo $p$, i.e., $v_p(a-1)\geq 1$, then writing $a=1+pb$ for some $b\in\Z_p$ we can expand the expression $a^x$ for $x\in\N$ as follows:
\begin{equation}\label{exp}
a^x=(1+pb)^x=\sum_{j=0}^x {x\choose j}p^jb^j=\sum_{j=0}^{+\infty} b^j(x)_j\frac{p^j}{j!},
\end{equation}
where $(x)_j=x(x-1)\cdot...\cdot(x-j+1)$ for $j\in\N_+$ and $(x)_0=1$ means Pochhammer symbol. The series on the right-hand side of the equation (\ref{exp}) is convergent for every $x\in\Z_p$. Hence we are able to extend the exponential function $a^x$ for $x\in\Z_p$ defining it as $\sum_{j=0}^{+\infty} b^j(x)_j\frac{p^j}{j!}$. The function $a^x$, $x\in\Z_p$, defined in a such way, is a $p$-adic locally analytic function (see \cite{Mahl}). In particular, this function is continuous. This fact combined with the identity $a^{x+y}=a^xa^y$ for $x,y\in\N$ and the density of $\N$ in $\Z_p$ implies the identity $a^{x+y}=a^xa^y$ for $x,y\in\Z_p$.

The above preparation becomes to be true for $p=2$ if we assume that $v_2(x)\geq 1$.

Assuming that $p$ is a prime number and $a$ is a $p$-adic integer of the form $a=1+pb$ for some $b\in\Z_p$, we define the $p$-adic logarithm of the number $a$ as follows:
\begin{align*}
\log_p a=\sum_{j=1}^{+\infty} \frac{(-1)^{j-1}}{j}(a-1)^j=\sum_{j=1}^{+\infty} \frac{(-1)^{j-1}b^jp^j}{j}.
\end{align*}
One can see that the series defining the $p$-adic logarithm is convergent. Hence the function $\log_p x$ is a $p$-adic locally analytic function defined on the closed ball $B(1,p^{-1})$. We have $\log_p a_1+\log_p a_2=\log_p a_1a_2$ for any $a_1,a_2\in B(1,p^{-1})$ and the derivative of the function $a^x$ (of variable $x\in\Z_p$) is equal to $a^x\log_p a$ for $a\in B(1,p^{-1})$ (see \cite{Mahl}). Moreover, $\log_p a=0$ if and only if $a$ is a root of unity (see \cite{Schi}). Since $a\equiv 1\pmod{p}$, we conclude that $\log_p a=0$ only for $a=1$ or $p=2$ and $a=-1$.

Now we are ready to state and prove the announced generalization of the lifting the exponent lemma.

\begin{thm}\label{thm3}
Let $p$ be a prime number and $k\in\N_+$, $k\geq 2$. Assume that $a_1,...,a_k$ are pairwise distinct $p$-adic integers congruent to $1$ modulo $p$. In case of $p=2$ we assume additionally that $\frac{a_j}{a_i}\neq -1$ for any $i,j\in\{1,...,k\}$. Let $c_1,...,c_k\in\Q_p\bs\{0\}$. Assume that a number $x_0\in\Z_p$ is a zero of the function $f(x)=\sum_{i=1}^k c_ia_i^x$, $x\in\Z_p$. Then there exist an integer $\alpha$,\linebreak a positive integer $l<k$ and a non-negative integer $m$ such that for any $t\in\Z_p$ congruent to $x_0$ modulo $p^m$ we have $v_p(f(t))=\alpha+lv_p(t-x_0)$. In particular, $x_0$ is an isolated zero of the function $f$.
\end{thm}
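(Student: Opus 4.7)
The plan is to exhibit $f$ as a $p$-adic locally analytic function on $\Z_p$, expand it as a power series around $x_0$, and control the order of vanishing by a Vandermonde argument. Since each exponential $a_i^x$ is locally analytic and differentiates to $a_i^x\log_p a_i$ (as already recorded in the paper), the finite linear combination $f$ is locally analytic and satisfies $f^{(j)}(x)=\sum_{i=1}^{k} c_i(\log_p a_i)^{j}a_i^{x}$. Hence on some neighbourhood of $x_0$ one has
\begin{equation*}
f(x)=\sum_{j=0}^{+\infty} b_j(x-x_0)^j,\qquad b_j=\frac{f^{(j)}(x_0)}{j!}=\frac{1}{j!}\sum_{i=1}^{k} c_i a_i^{x_0}(\log_p a_i)^{j},
\end{equation*}
and we already know $b_0=f(x_0)=0$.

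The key step is to show that some $b_l\neq 0$ with $l\leq k-1$. Suppose for contradiction that $b_0=b_1=\cdots=b_{k-1}=0$. Writing $d_i=c_i a_i^{x_0}$ and $\ell_i=\log_p a_i$, this reads
\begin{equation*}
\sum_{i=1}^{k} d_i\ell_i^{j}=0,\qquad j=0,1,\ldots,k-1,
\end{equation*}
a linear system whose coefficient matrix is Vandermonde in $\ell_1,\ldots,\ell_k$. Under the hypotheses these logarithms are pairwise distinct: for $i\neq j$ the ratio $a_i/a_j$ lies in $B(1,p^{-1})$ and, by assumption, equals neither $1$ nor (in the case $p=2$) $-1$, so it is not a root of unity; by the characterization $\log_p a=0\iff a$ is a root of unity recalled above, $\log_p(a_i/a_j)=\ell_i-\ell_j\neq 0$. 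The Vandermonde determinant is therefore nonzero, forcing $d_i=0$ for every $i$. But $c_i\neq 0$ and $a_i$ is a $p$-adic unit, so $d_i\neq 0$, a contradiction. Hence some such $l$ exists with $1\leq l\leq k-1$.

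To finish, I would mimic Case~3 of the proof of Theorem~\ref{thm2}: shrink the neighbourhood of $x_0$ to a ball $B(x_0,p^{-m})$ small enough that $v_p\bigl(\sum_{j\geq l}b_j(x-x_0)^{j-l}\bigr)=v_p(b_l)$ throughout (possible by continuity of the tail), and for $t\in B(x_0,p^{-m})$ conclude
\begin{equation*}
v_p(f(t))=v_p(b_l)+l\,v_p(t-x_0),
\end{equation*}
so that $\alpha=v_p(b_l)$ does the job. Isolation of $x_0$ as a zero follows immediately. I expect the main obstacle to be precisely the pairwise distinctness of the $\ell_i$; the extra hypothesis $a_j/a_i\neq -1$ in the case $p=2$ is exactly what is needed to apply the kernel description of $\log_p$ and make the Vandermonde step go through.
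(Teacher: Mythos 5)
Your proposal is correct and follows essentially the same route as the paper: a Vandermonde argument on the system $\sum_i c_i a_i^{x_0}(\log_p a_i)^j=0$, $0\le j\le k-1$, to force some nonzero Taylor coefficient $b_l$ with $1\le l\le k-1$, followed by the standard local factorization $v_p(f(t))=v_p(b_l)+l\,v_p(t-x_0)$ on a small ball (the paper simply cites its Theorem~\ref{thm2} for this last step rather than redoing Case~3). Your explicit justification that the $\log_p a_i$ are pairwise distinct via $\log_p(a_i/a_j)\neq 0$ and the root-of-unity characterization is a welcome elaboration of a point the paper states more tersely.
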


\begin{proof}
From the previous theorem we know that there exist $\alpha\in\Z$, $l\in\N\cup\{+\infty\}$ and $m\in\N$ such that $v_p(f(t))=\alpha+lv_p(t-x_0)$ for any $t\in B(x_0,p^{-m})$, where $l=\inf\{n\in\N: f^{(n)}(x_0)\neq 0\}$ (we set that $\inf\varnothing=+\infty$). Hence it remains to prove that $l<k$. If we assume the contrary, then $f^{(n)}(x_0)=0$ for any $n\in\{0,...,k-1\}$. On the other hand,
\begin{align*}
f^{(n)}(x_0)=\sum_{i=1}^k c_ia_i^{x_0}(\log_p a_i)^n,\quad n\in\N.
\end{align*}
Then the $k$-tuple $(c_ia_i^{x_0})_{i=1}^k$ is a non-zero solution of the system of linear equations $\sum_{i=1}^k (\log_p a_i)^nX_i=0$, $0\leq n\leq k-1$, with unknowns $X_i$, $1\leq i\leq k$. However, the matrix of this system is the Vandermonde matrix for values $\log_p a_i$, $1\leq i\leq k$. Hence its determinant is equal to $\prod_{1\leq i<j\leq k} (\log_p a_j-\log_p a_i)\neq 0$, since the values $a_i$, $1\leq i\leq k$, are pairwise distinct and no ratio of the form $\frac{a_j}{a_i}\neq -1$. Thus the only solution of this system is zero. The contradiction shows the inequality $l<k$.
\end{proof}

\begin{rem}
{\rm It is worth to see that in general the value $l$ in the statement of Theorem \ref{thm3} can be greater than $1$. Let us take any two distinct integers $a$, $b$ congruent to $1$ modulo $p$ if $p>2$, or congruent to $1$ modulo $4$ if $p=2$. Then the function $f(x)=(a^2)^x+(b^2)^x-2(ab)^x$ vanishes at $0$ and $f'(0)=\log_p a^2 +\log_p b^2 -2\log_p ab=0$. Thus $l>1$. On the other hand, by Theorem \ref{thm3} we have $l<3$. As a result, $l=2$ and finally there exist $\alpha\in\Z$ and $m\in\N$ that $v_p(f(x))=\alpha+2v_p(x)$ for $x\in B(0,p^{-m})$.

In fact, if we fix $k$, then $l$ may attain every value from the set $\{0,...,k-1\}$. It suffices to choose $c_1,...,c_k\in\Q_p$ such that $\sum_{i=1}^k (\log_p a_i)^nc_i=0$, $0\leq n\leq l-1$, and $\sum_{i=1}^k (\log_p a_i)^lc_i\neq 0$. We are able to choose such $c_1,...,c_k$ since, as we noted in the proof of Theorem \ref{thm3}, the determinant of the system of equations $\sum_{i=1}^k (\log_p a_i)^nX_i=0$, $0\leq n\leq k-1$, is non-zero. Then for some integer $\alpha$ we have $v_p(f(x))=\alpha+lv_p(x)$ for any $x$ sufficiently close to $0$ with respect to the $p$-adic metric.

Unfortunately, fixing $k>l>2$ we do not know if we can choose $a_1,...,a_k$ to be integers and $c_1,...,c_k$ to be rational numbers. Hence, in case $a_1,...,a_k\in\Z$, $c_1,...,c_k\in\Q$ we do not know about the value $l$ anymore than from Theorem \ref{thm3}.}
\end{rem}

\section{Proof of Theorem \ref{thm1}}

Let us denote, as in \cite{Cla}, $$T_p(n,k)=\sum_{j=1, p\nmid j}^k (-1)^{k-j}{k\choose j}j^n.$$ From the preparation before Theorem \ref{thm3}, if $j\in\Z$ is not divisible by $p$ then for $a\in\{0,...,p-2\}$ the function $\N_+\ni n\mapsto j^{a+n(p-1)}\in\Z$ extends to a $p$-adic locally analytic function defined on $\Z_p$. Hence for each $a\in\{0,...,p-2\}$ the function $\N_+\ni n\mapsto T_p(a+n(p-1),k)\in\N$ is a restriction of a $p$-adic locally analytic function $f_{a,k}:\Z_p\rightarrow\Z_p$ to $\N_+$. Thus we apply Theorem \ref{thm2} and Theorem \ref{thm3} to each function $f_{a,k}$ and we obtain $m_{0,a}$ and $\mu_a$ as specified in the statement of Theorem \ref{thm2}. We thus put $$m_0=m_0(k)=1+\max_{0\leq a\leq p-2} m_{0,a}$$ and $$\mu=\mu(k)=\sum_{a=0}^{p-2} \mu_a.$$ Then there holds the statement of Theorem \ref{thm1} for the sequence $T_p(n,k)$ with the words ``almost'' and ``essentially'' ommited.

If $p>k$ then $S(n,k)=\frac{1}{k!}T_p(n,k)$ and the statement holds with omitted words ``almost'' and ``essentially''. Let us fix a non-constant class $[a]_{p^{m_0-1}(p-1)}$ modulo $p^{m_0-1}(p-1)$. Each member of this class can be written in the form $n=a_0+\tilde{n}(p-1)$, where $a_0=a\bmod{p-1}$. Then, by Theorem \ref{thm2} and Theorem \ref{thm3}, we have $\beta\in\Z$, $l\in\{1,...,k-1\}$ and $\tilde{x}_0\in\Z_p$ such that
\begin{align*}
v_p(S(n,k))=v_p(S(a_0+\tilde{n}(p-1),k))=v_p(f_{a_0,k}(\tilde{n}))=\beta+lv_p(\tilde{n}-\tilde{x}_0)
\end{align*}
for $n\in [a]_{p^{m_0-1}(p-1)}$. Putting $x_0=a_0+\tilde{x}_0(p-1)$, since $v_p(n-x_0)=v_p(\tilde{n}-\tilde{x}_0)$, we get
\begin{align*}
v_p(S(n,k))=\beta+lv_p(n-x_0)
\end{align*}
for $n\in [a]_{p^{m_0-1}(p-1)}$.

In general
\begin{align}\label{sum}
S(n,k)=\frac{1}{k!}\left(T_p(n,k)+\sum_{0<j\leq k, p\mid j}(-1)^{k-j}{k\choose j}j^n\right).
\end{align}
Certainly, $v_p\left(\sum_{0<j\leq k, p\mid j}(-1)^{k-j}{k\choose j}j^n\right)\geq n$. Hence, if $[a]_{(p-1)p^{m-1}}$ is a constant class with respect to $(T_p(n,k))_{n\in\N_+}$ with the $p$-adic valuation $t$ then, by (\ref{sum}), $v_p(S(n,k))=t-v_p(k!)$ for each $n\in [a]_{(p-1)p^{m-1}}$ satisfying $n>t$. On the other hand, if $[a]_{(p-1)p^{m-1}}$ is a non-constant class with respect to $(T_p(n,k))_{n\in\N_+}$ then it contains infinitely many constant classes with respect to $(T_p(n,k))_{n\in\N_+}$ with pairwise distinct $p$-adic valuations. Hence the numbers $v_p(S(n,k))$, $n\in [a]_{(p-1)p^{m-1}}$, attain infinitely many values and the class $[a]_{(p-1)p^{m-1}}$ is not almost constant with respect to $(S(n,k))_{n\in\N_+}$.

\section{Concluding remarks}

Theorem \ref{thm1} states that for a given prime number $p$ and a positive integer $k$ there exists $m_0=m_0(k)\in\N_+$ such that for each essentially non-costant congruence class $[a]_{p^{m_0-1}(p-1)}$ modulo $p^{m_0-1}(p-1)$ with respect to the sequence $(S(n,k))_{n\in\N_+}$ and the prime number $p$ there are $\alpha\in\Z$ and $l\in\left\{1,...,k-\left\lfloor\frac{k}{p}\right\rfloor-1\right\}$ such that for each $m\geq m_0$ the ultimate minimal $p$-adic valuation of the unique essentially non-constant class modulo $p^{m-1}(p-1)$ contained in $[a]_{p^{m_0-1}(p-1)}$ is equal to $\alpha+l(m-m_0)$. However, we do not know any example of a prime number $p$, a positive integer $k$ and an essentially non-constant class modulo $p^{m_0-1}(p-1)$ with respect to the sequence $(S(n,k))_{n\in\N_+}$ and the prime number $p$ for which $l>1$.

\begin{prob}
Do there exist a prime number $p$ and positive integers $k,a$ such that $[a]_{p^{m_0-1}(p-1)}$ ($m_0$ as specified in the statement of Theorem \ref{thm1}) is an essentially non-constant class modulo $p^{m_0-1}(p-1)$ with respect to the sequence $(S(n,k))_{n\in\N_+}$ and prime number $p$ for which the value $l$ specified in the statement of Theorem \ref{thm1} is greater than $1$?
\end{prob}

We can show, that if $a<k<p$, then $l$ must be equal to $1$.

\begin{thm}
Let $k,a$ be positive integers and $p$ be a prime number such that $a<k<p$. Then for each positive integer $n\equiv a\pmod{p^{m_0-1}(p-1)}$ there holds $v_p(S(n,k))=v_p(S(a+p^{m_0-1}(p-1),k))+v_p(n-a)-m_0+1$.

In particular, for each positive integer $m\geq m_0$, $[a]_{p^{m-1}(p-1)}$ is the only non-constant class modulo $p^{m-1}(p-1)$ contained in $[a]_{p^{m_0-1}(p-1)}$ and its least $p$-adic valuation is equal to $v_p(S(a+p^{m_0-1}(p-1),k))+m-m_0$.
\end{thm}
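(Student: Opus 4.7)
The plan is to apply Theorems \ref{thm2} and \ref{thm3} to the $p$-adic locally analytic function $f_{a,k}\colon\Z_p\to\Z_p$ introduced in the proof of Theorem \ref{thm1}, at the zero $\tilde n=0$, and to show that the exponent $l$ in the resulting local formula equals $1$. That $\tilde n=0$ is a zero follows from the combinatorial identity $S(a,k)=0$ for $a<k$, which gives $f_{a,k}(0)=T_p(a,k)=k!\,S(a,k)=0$. Granting $l=1$, Theorem \ref{thm3} produces an $\alpha\in\Z$ with $v_p(f_{a,k}(\tilde n))=\alpha+v_p(\tilde n)$ for $\tilde n\in p^{m_0-1}\Z_p$. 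Since $p>k\ge 2$ the prime $p$ is odd, hence $v_p(k!)=0$ and $v_p(\tilde n)=v_p(n-a)$ for $n=a+\tilde n(p-1)$. Therefore $v_p(S(n,k))=\alpha+v_p(n-a)$ on $[a]_{p^{m_0-1}(p-1)}$; specializing to $n=a+p^{m_0-1}(p-1)$ (where $v_p(n-a)=m_0-1$) solves $\alpha=v_p(S(a+p^{m_0-1}(p-1),k))-(m_0-1)$, yielding the stated formula.

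So the crux is to show $l=1$. I would differentiate the series
\[
f_{a,k}(\tilde n)=\sum_{j=1}^k(-1)^{k-j}\binom{k}{j}\,j^a\,(j^{p-1})^{\tilde n}
\]
term by term using $\frac{d}{d\tilde n}b^{\tilde n}=b^{\tilde n}\log_p b$ for $b\in 1+p\Z_p$ (legitimate since $j^{p-1}\in 1+p\Z_p$ by Fermat's little theorem), then bundle the result into one $p$-adic logarithm via $\Z_p$-linearity of $\log_p$ on $1+p\Z_p$:
\[
f'_{a,k}(0)=\sum_{j=2}^k(-1)^{k-j}\binom{k}{j}\,j^a\log_p(j^{p-1})=\log_p P,\qquad P=\prod_{j=2}^k(j^{p-1})^{(-1)^{k-j}\binom{k}{j}\,j^a}.
\]
The $j=1$ contribution drops because $\log_p 1=0$, and $P$ lies in $1+p\Z_p$. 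As $p$ is odd, the observation preceding Theorem \ref{thm3} that $\log_p$ vanishes on $1+p\Z_p$ only at $1$ reduces $f'_{a,k}(0)\ne 0$ to verifying $P\ne 1$ as a rational number.

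This non-triviality of $P$ is what I expect to be the main obstacle, and I would resolve it with Bertrand's postulate. Since $k\ge 2$ there is a prime $q$ with $k/2<q\le k$, so in particular $q<p$. The condition $2q>k$ forces $q$ to be the unique element of $\{2,3,\dots,k\}$ divisible by $q$, whence
\[
v_q(P)=(p-1)(-1)^{k-q}\binom{k}{q}q^a\ne 0,
\]
so $P\ne 1$ and thus $l=1$. The ``in particular'' assertion then falls out of $v_p(S(n,k))=\alpha+v_p(n-a)$: any $n\in [a]_{p^{m_0-1}(p-1)}\setminus [a]_{p^{m-1}(p-1)}$ satisfies $v_p(n-a)<m-1$, so its subclass modulo $p^{m-1}(p-1)$ is constant with valuation $\alpha+v_p(n-a)$, leaving $[a]_{p^{m-1}(p-1)}$ as the unique non-constant subclass, whose least $p$-adic valuation is attained at $n$ with $v_p(n-a)=m-1$ and equals $\alpha+(m-1)=v_p(S(a+p^{m_0-1}(p-1),k))+m-m_0$.
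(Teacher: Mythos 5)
Your proposal is correct and follows essentially the same route as the paper: reduce to showing $l=1$ for the locally analytic function $f_{a,k}$ vanishing at $0$, compute $f_{a,k}'(0)$ as the $p$-adic logarithm of a product, and rule out triviality of that product via Bertrand's postulate by examining its $q$-adic valuation for a prime $q\in(k/2,k]$. The only cosmetic differences are that you invoke Theorems \ref{thm2} and \ref{thm3} directly rather than through the Main Theorem and you spell out the uniqueness of the multiple of $q$ in $\{2,\dots,k\}$, which the paper uses implicitly.
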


\begin{proof}
We have $S(a,k)=0$. Since $k<p$, by Theorem 1, we have $\beta\in\Z$ and $l\in\N_+$ such that
\begin{equation}\label{form}
v_p(S(n,k))=\beta+lv_p(n-a)
\end{equation}
for $n\equiv a\pmod{p^{m_0-1}(p-1)}$. Putting $n=a+p^{m_0-1}(p-1)$ in (\ref{form}) we obtain $\beta=v_p(S(a+p^{m_0-1}(p-1),k))-l(m_0-1)$. It suffices to prove that $l=1$. We have $S(a+n(p-1),k)=\frac{1}{k!}f_{a,k}(n)$ for $n\in\N$, where $f_{a,k}(x)=\sum_{j=1}^k (-1)^{k-j}{k\choose j}j^a(j^{p-1})^x$, $x\in\Z_p$, is a $p$-adic locally analytic function. Since $f_{a,k}(0)=S(a,k)=0$ and $v_p(f_{a,k}(n))=v_p(S(a+n(p-1)))-v_p(k!)$, we thus infer that $l=\inf\{i\in\N: f_{a,k}^{(i)}\neq 0\}$. Hence it remains us to show that $f_{a,k}'(0)\neq 0$. Indeed,
$$f_{a,k}'(0)=\sum_{j=1}^k (-1)^{k-j}{k\choose j}j^a\log_p(j^{p-1})=\log_p\left(\prod_{j=1}^k j^{(-1)^{k-j}{k\choose j}j^a(p-1)}\right).$$
Let us note that for $k\geq 2$, by Tshebyshev theorem there exists a prime number $q$ such that $\frac{k}{2}<q\leq k$. Then the $q$-adic valuation of the number $\prod_{j=0}^k j^{(-1)^{k-j}{k\choose j}j^a(p-1)}$ is equal to $(-1)^{k-q}{k\choose q}q^a(p-1)\neq 0$. This means that $\prod_{j=0}^k j^{(-1)^{k-j}{k\choose j}j^a(p-1)}\not\in\{-1,1\}$ and from our preparation about the $p$-adic logarithm we conclude that $$f_{a,k}'(0)=\log_p\left(\prod_{j=0}^k j^{(-1)^{k-j}{k\choose j}j^a(p-1)}\right)\neq 0.$$
\end{proof}

\end{document}